\newtheorem{theorem}{Theorem}
\newtheorem{proposition}[theorem]{Proposition}
\newtheorem{lemma}[theorem]{Lemma}
\newtheorem{definition}[theorem]{Definition}
\newtheorem{remark}[theorem]{Remark}
\def\di{\displaystyle}
\newcommand{\N}{\mathbb{N}}
\newcommand{\R}{\mathbb{R}}
\newcommand{\CC}{\mathscr{C}}
\newcommand{\fonction}[5]{\begin{array}[t]{lrcl}#1 :&#2 &\longrightarrow &#3\\&#4& \longmapsto &#5 \end{array}}
\newcommand{\fonctionsansdef}[3]{\begin{array}[t]{lrcl}#1 :&#2 &\longrightarrow &#3 \end{array}}
\DeclareMathOperator*{\limit}{\longrightarrow}
\begin{document}

\title{\textbf{A continuous/discrete fractional Noether's theorem}}
\author{Lo\"ic Bourdin}
\address{Laboratoire de Math\'ematiques et de leurs Applications - Pau (LMAP). UMR CNRS 5142. Universit\'e de Pau et des Pays de l'Adour.}
\email{bourdin.l@etud.univ-pau.fr}

\author{Jacky Cresson}
\address{Laboratoire de Math\'ematiques et de leurs Applications - Pau (LMAP). UMR CNRS 5142. Universit\'e de Pau et des Pays de l'Adour.}
\email{jacky.cresson@univ-pau.fr}

\author{Isabelle Greff}
\address{Laboratoire de Math\'ematiques et de leurs Applications - Pau (LMAP). UMR CNRS 5142. Universit\'e de Pau et des Pays de l'Adour.}
\email{isabelle.greff@univ-pau.fr}
\date{}
\maketitle

\begin{abstract}
We prove a fractional Noether's theorem for fractional Lagrangian systems invariant under a symmetry group both in the continuous and discrete cases. This provides an explicit conservation law (first integral) given by a closed formula which can be algorithmically implemented. In the discrete case, the conservation law is moreover computable in a finite number of steps.
\end{abstract}

\textbf{\textrm{Keywords:}} Lagrangian systems; Noether's theorem; symmetries; fractional calculus.

\textbf{\textrm{AMS Classification:}} 26A33; 49J05; 70H03.

\section{Introduction}
Fractional calculus is the emerging mathematical field dealing with the generalization of the derivative to any real order. During the last two decades, it has been successfully applied to problems in economics \cite{comt}, computational biology \cite{magi} and several fields in Physics \cite{bagl,hilf3,alme,neel}. Fractional differential equations are also considered as an alternative model to non-linear differential equations, see \cite{boni}. We refer to \cite{oldh,samk,podl,kilb} for a general theory and to \cite{mach} for more details concerning the recent history of fractional calculus. Particularly, a subtopic of the fractional calculus has recently gained importance: it concerns the variational principles on functionals involving fractional derivatives. This leads to the statement of fractional Euler-Lagrange equations, see \cite{riew,agra,bale2}. \\

A conservation law for a continuous or a discrete dynamical system is a function which is constant on each solution. Precisely, let us denote by $q = ( q(t) )_{ t \in [a,b]}$ (resp. $Q=(Q_k )_{k=0,\dots ,N}$) the solutions of a continuous (resp. discrete) system taking values in a set denoted by $E$. Then, a function $C : E \rightarrow \R$ is a conservation law (or first integral) if, for any solution $q$ (resp. $Q$), it exists a real constant $c$ such that:
\begin{equation}
C\big( q(t) \big)=c,\; \forall t\in [a,b]. \quad (\text{resp.}\; C(Q_k )=c ,\; \forall k=0,\dots ,N.)
\end{equation} 
Conservation laws are generally associated to some physical quantities like total energy or angular momentum in mechanical systems and sometimes, they also can be used to reduce or integrate by quadrature the equation.\\

In this paper, we study the existence of explicit conservation laws for continuous and discrete fractional Lagrangian systems introduced in \cite{agra} and \cite{bour} respectively. We follow the usual strategy of the classical Noether's theorem providing an explicit conservation law for classical Lagrangian systems invariant under symmetries. Precisely, we prove a fractional Noether's theorem providing an explicit conservation law for fractional Lagrangian systems invariant under symmetries both in the continuous and discrete cases. The given formula is algorithmic so that arbitrary hight order approximations can be computed. Moreover, the algorithm is finite in the discrete case, .\\

Previous results in this direction (in the continuous case) have been obtained by several authors, see \cite{cres6,torr3,atan,musl}. However, in each of these papers, the conservation law is not explicitly derived in terms of symmetry group and Lagrangian but implicitly defined by functional relations (see \cite{torr3,musl}) or integral relations (see \cite{atan}).\\

The paper is organized as follows. In Section \ref{section1}, we first remind classical definitions and results concerning fractional derivatives and fractional Lagrangian systems. Then, we prove a transfer formula leading to the statement of a fractional Noether's theorem based on a result of Cresson in \cite{cres6}. Finally, we give some remarks concerning the previous studies on the subject and we give some precisions about our result. In Section \ref{section2}, we first remind the definition of discrete fractional Lagrangian systems in the sense of \cite{bour}. Then, introducing the notion of discrete symmetry for these systems, we state a discrete fractional Noether's theorem. We conclude with some remarks and a numerical implementation of the discrete fractional harmonic oscillator. 

\section{A fractional Noether's theorem}\label{section1}
In this paper, we consider fractional differential systems in $\R^d$ where $d \in \N^*$ is the dimension ($\N^*$ denotes the set of positive integer). The trajectories of these systems are curves $q$ in $\CC^0 ([a,b],\R^d)$ where $a<b$ are two reals.

\subsection{Reminder about fractional Lagrangian systems}\label{section11}
We first review classical definitions extracted from \cite{samk,podl,kilb} of the fractional derivatives of Riemann-Liouville. Let $\alpha > 0$ and $f$ be a function defined on $[a,b]$ with values in $\R^d$. The left (resp. right) fractional integral in the sense of Riemann-Liouville with inferior limit $a$ (resp. superior limit $b$) of order $ \alpha $ of $f$ is given by:
\begin{equation}
\forall t \in ]a,b], \; I^{\alpha}_{-} f(t) = \dfrac{1}{\Gamma (\alpha)} \di \int_a^t (t-y)^{\alpha -1} f(y) \; dy ,
\end{equation}
respectively:
\begin{equation}
\forall t \in [a,b[, \; I^{\alpha}_{+} f(t) = \dfrac{1}{\Gamma (\alpha)} \di \int_t^b (y-t)^{\alpha -1} f(y) \; dy ,
\end{equation}
where $\Gamma$ denotes the Euler's Gamma function and provided the right side terms are defined. For $\alpha =0$, let us define $I^{0}_{-} f = I^{0}_{+} f = f $. \\

From now, let us consider $ 0 < \alpha \leq 1$. The left (resp. right) fractional derivative in the sense of Riemann-Liouville with inferior limit $a$ (resp. superior limit $b$) of order $\alpha$ of $f$ is given by:
\begin{equation}
\forall t \in ]a,b], \; D^\alpha_- f (t) = \dfrac{d}{dt} \Big( I^{1-\alpha}_- f \Big) (t) ,
\end{equation}
respectively:
\begin{equation}
\forall t \in [a,b[, \; D^\alpha_+ f (t) = -\dfrac{d}{dt} \Big( I^{1-\alpha}_+ f \Big) (t) ,
\end{equation}
provided the right side terms are defined. \\

A fractional Lagrangian functional is an application defined by:
\begin{equation}\label{eqlagfuncfrac}
\fonction{\mathcal{L}^\alpha}{\CC^{2}([a,b],\R^d)}{\R}{q}{\di \int_{a}^{b} L \big( q(t),D^\alpha_- q (t),t \big) \; dt,} 
\end{equation}
where $L$ is a Lagrangian \textit{i.e.} a $\CC^{2}$ application defined by:
\begin{equation}
\fonction{L}{\R^d \times \R^d \times [a,b]}{\R}{(x,v,t)}{L(x,v,t).}
\end{equation}
It is well-known that extremals of a fractional Lagrangian functional can be characterized as solutions of a fractional differential system, see \cite{agra}. Precisely, $q$ is a critical point of $\mathcal{L}^\alpha$ if and only if $q$ is a solution of the fractional Euler-Lagrange equation given by:
\begin{equation}\label{elf}\tag{EL${}^\alpha$}
\dfrac{\partial L}{\partial x} \big( q,D^{\alpha}_{-} q,t \big) + D^{\alpha}_{+} \left( \dfrac{\partial L}{\partial v} \big( q,D^{\alpha}_{-} q,t \big) \right) = 0 . 
\end{equation}
A dynamical system governed by an equation shaped as \eqref{elf} is called fractional Lagrangian system. \\

For $\alpha =1$, $D^1_- = - D^1_+ = d/dt$ and then {\rm (EL${}^1$)} coincides with the classical Euler-Lagrange equation (see \cite{arno}).

\subsection{Symmetries} 

We first review the definition of a one parameter group of diffeomorphisms: 
\begin{definition}
For any real $s$, let $\fonctionsansdef{\phi (s,\cdot)}{\R ^d}{\R ^d}$ be a diffeomorphism. Then, $\Phi = \{ \phi (s,\cdot) \}_{s \in \R}$ is a one parameter group of diffeomorphisms if it satisfies 
\begin{enumerate}
\item $\phi (0,\cdot) = Id_{\R ^d}$, 
\item $\forall s$, $s' \in \R, \; \phi (s,\cdot) \circ \phi (s',\cdot) = \phi (s+s',\cdot) $,
\item $\phi$ is of class $\CC^2$.
\end{enumerate}
\end{definition}

Classical examples of one parameter groups of diffeomorphisms are given by translations and rotations.\\

The action of a one parameter group of diffeomorphisms on a Lagrangian allows to define the notion of a symmetry for a fractional Lagrangian system:

\begin{definition}
\label{definvariancefrac}
Let $\Phi = \{ \phi (s,\cdot) \}_{s \in \R}$ be a one parameter group of diffeomorphisms and let $L$ be a Lagrangian. $L$ is said to be $D^\alpha_-$-invariant under the action of $\Phi$ if it satisfies:
\begin{equation}
\forall q \text{ solution of \eqref{elf}},  \; \forall s \in \R, \; L \Big( \phi(s,q), D^\alpha_- \big( \phi (s,q) \big),t \Big) = L \big( q,D^\alpha_- q,t \big) .
\end{equation}
\end{definition}

% Theorem \ref{thmcresson} et \eqref{eqthmcress} et \ref{thmcresson}

\subsection{A fractional Noether's theorem}\label{section13}

Cresson \cite{cres6} and Torres \textit{et al.} \cite{torr3} proved the following result:

\begin{lemma}
\label{lemcresson}
Let $L$ be a Lagrangian $D^\alpha_-$-invariant under the action of a one parameter group of diffeomorphisms $\Phi = \{ \phi (s,\cdot) \}_{s \in \R}$. Then, the following equality holds for any solution $q$ of \eqref{elf}:
\begin{equation}\label{eqlemcresson}
D^\alpha_- \left( \dfrac{\partial \phi}{\partial s} (0,q) \right) \cdot \dfrac{\partial L}{\partial v}(q,D^\alpha_- q,t) - \dfrac{\partial \phi}{\partial s} (0,q) \cdot D^\alpha_+ \left( \dfrac{\partial L}{\partial v}(q,D^\alpha_- q,t) \right)  = 0 .
\end{equation}
\end{lemma}

In the classical case $\alpha =1$, the classical Leibniz formula allows to rewrite \eqref{eqlemcresson} as the derivative of a product. Precisely, for $\alpha =1$, Lemma \ref{lemcresson} leads to the classical Noether's theorem given by:

\begin{theorem}
[Classical Noether's theorem]
\label{thmnoetherclass}
Let $L$ be a Lagrangian $d/dt$-invariant under the action of a one parameter group of diffeomorphisms $\Phi = \{ \phi (s,\cdot) \}_{s \in \R}$. Then, the following equality holds for any solution $q$ of {\rm (EL${}^1$)}:
\begin{equation}\label{eqthmnoetherclass}
\dfrac{d}{dt} \left( \dfrac{\partial \phi}{\partial s} (0,q)  \cdot \dfrac{\partial L}{\partial v}(q,\dot{q},t) \right)  = 0, 
\end{equation}
where $\dot{q}$ is the classical derivative of $q$, \textit{i.e.} $dq/dt$.
\end{theorem}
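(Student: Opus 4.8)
The plan is to derive Theorem~\ref{thmnoetherclass} as the special case $\alpha = 1$ of Lemma~\ref{lemcresson}, using the two facts recorded just after \eqref{elf}: that $D^1_- = d/dt$ and $D^1_+ = -d/dt$. First I would observe that for $\alpha = 1$ the $D^\alpha_-$-invariance hypothesis of Definition~\ref{definvariancefrac} is precisely the $d/dt$-invariance hypothesis of the theorem, and that the equation \eqref{elf} becomes the classical Euler--Lagrange equation (EL${}^1$); hence the hypotheses of Lemma~\ref{lemcresson} are met and its conclusion \eqref{eqlemcresson} holds for every solution $q$ of (EL${}^1$).

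Next I would substitute $D^1_- = d/dt$ and $D^1_+ = -d/dt$ into \eqref{eqlemcresson}. The left-hand factor $D^\alpha_- \big( \partial \phi / \partial s (0,q) \big)$ becomes $\frac{d}{dt}\big( \partial \phi / \partial s (0,q) \big)$, and the term $- \partial\phi/\partial s(0,q) \cdot D^\alpha_+ \big( \partial L/\partial v (q,D^\alpha_- q,t) \big)$ becomes $+\, \partial\phi/\partial s(0,q) \cdot \frac{d}{dt}\big( \partial L/\partial v (q,\dot q,t) \big)$ because of the extra minus sign in $D^1_+$. So \eqref{eqlemcresson} reads
\begin{equation*}
\frac{d}{dt}\!\left( \frac{\partial \phi}{\partial s}(0,q) \right) \cdot \frac{\partial L}{\partial v}(q,\dot q,t) + \frac{\partial \phi}{\partial s}(0,q) \cdot \frac{d}{dt}\!\left( \frac{\partial L}{\partial v}(q,\dot q,t) \right) = 0 .
\end{equation*}

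Finally I would recognize the left-hand side as the full derivative of a scalar (dot) product. Since $q \in \mathcal{C}^2$, $\phi \in \mathcal{C}^2$ and $L \in \mathcal{C}^2$, the maps $t \mapsto \partial\phi/\partial s(0,q(t))$ and $t \mapsto \partial L/\partial v(q(t),\dot q(t),t)$ are both $\mathcal{C}^1$, so the Leibniz rule for the derivative of an $\R^d$-valued inner product applies and the displayed identity is exactly $\frac{d}{dt}\big( \partial\phi/\partial s(0,q) \cdot \partial L/\partial v(q,\dot q,t) \big) = 0$, which is \eqref{eqthmnoetherclass}.

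I do not expect a genuine obstacle here: the result is a direct specialization, and the only point requiring a word of care is checking that the regularity assumptions legitimately allow the Leibniz product rule to be invoked (this is where the $\mathcal{C}^2$ hypotheses on $q$, $\phi$ and $L$ are used). The real mathematical content lies in Lemma~\ref{lemcresson} and, in the fractional setting, in the absence of a Leibniz rule for $D^\alpha_\pm$, which is precisely the difficulty the paper's ``transfer formula'' is designed to circumvent.
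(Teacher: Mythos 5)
Your proof is correct and follows exactly the route the paper intends: specialize Lemma \ref{lemcresson} to $\alpha=1$ using $D^1_-=d/dt$ and $D^1_+=-d/dt$, then recognize the resulting identity as the Leibniz expansion of $\frac{d}{dt}\left(\frac{\partial\phi}{\partial s}(0,q)\cdot\frac{\partial L}{\partial v}(q,\dot q,t)\right)$. The paper only sketches this step in one sentence, so your write-up is simply a more detailed version of the same argument.
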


Theorem \ref{thmnoetherclass} provides an explicit constant of motion for any classical Lagrangian systems admitting a symmetry. In the fractional case, such a simple formula allowing to rewrite \eqref{eqlemcresson} as a total derivative with respect to $t$ is not known yet. Nevertheless, the next theorem provides such a formula in an explicit form:

\begin{theorem}[Transfer formula]
\label{thmleibizfrac}
Let $f$, $g \in \CC^{\infty} ([a,b],\R^d)$ assuming the following Condition (C):
\begin{center}
the sequences of functions $ (I^{p-\alpha}_- f \cdot g^{(p)})_{p \in \N^*} $ and \\ $ (f^{(p)} \cdot I^{p-\alpha}_+ g )_{p \in \N^*} $ converge uniformly to $0$ on $ [a,b] $.
\end{center}
Then, the following equality holds:
\begin{equation}\label{eqthmleibnizfrac}
D^\alpha_- f \cdot g - f \cdot D^\alpha_+ g = \dfrac{d}{dt} \left[ \di \sum_{r=0}^{\infty} (-1)^r I^{r+1-\alpha}_- f \cdot g^{(r)} + f^{(r)} \cdot I^{r+1-\alpha}_+ g \right]. 
\end{equation}
\end{theorem}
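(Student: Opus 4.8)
The plan is to reduce \eqref{eqthmleibnizfrac} to an iterated use of the ordinary Leibniz rule, relying on two elementary facts from Riemann--Liouville calculus: first, the definitions $D^\alpha_- = \tfrac{d}{dt}\circ I^{1-\alpha}_-$ and $D^\alpha_+ = -\tfrac{d}{dt}\circ I^{1-\alpha}_+$; second, that by the semigroup property $I^\mu_\mp I^\nu_\mp = I^{\mu+\nu}_\mp$ together with $\tfrac{d}{dt}\circ I^1_\mp = \pm\mathrm{Id}$, one has $I^\beta_- h = \tfrac{d}{dt} I^{\beta+1}_- h$ and $I^\beta_+ h = -\tfrac{d}{dt} I^{\beta+1}_+ h$ for every $\beta\ge 0$ and every continuous $h$ (here $I^0_\mp=\mathrm{Id}$, and for $\beta>0$ the function $I^\beta_\mp h$ extends continuously to $[a,b]$ with $I^\beta_- h(a)=I^\beta_+ h(b)=0$). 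I would treat the two terms $D^\alpha_- f\cdot g$ and $-f\cdot D^\alpha_+ g$ separately and add the outcomes. (One may assume $0<\alpha<1$; at $\alpha=1$ the manipulations below still go through and reduce to the classical Leibniz rule.)

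First, for the left term I would establish by induction on $n\in\N$ the finite identity
\[
D^\alpha_- f\cdot g \;=\; \frac{d}{dt}\Big[\,\sum_{r=0}^{n}(-1)^r\, I^{r+1-\alpha}_- f\cdot g^{(r)}\,\Big] \;+\; (-1)^{n+1}\, I^{n+1-\alpha}_- f\cdot g^{(n+1)}.
\]
The base case $n=0$ is exactly $\tfrac{d}{dt}(I^{1-\alpha}_- f\cdot g)=D^\alpha_- f\cdot g + I^{1-\alpha}_- f\cdot g'$. For the inductive step one rewrites the remainder as $(-1)^{n+1}\big(\tfrac{d}{dt} I^{n+2-\alpha}_- f\big)\cdot g^{(n+1)}$ and applies the Leibniz rule, which splits it into the next term of the sum and the level-$(n+1)$ remainder. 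The symmetric induction for the right term gives
\[
-f\cdot D^\alpha_+ g \;=\; \frac{d}{dt}\Big[\,\sum_{r=0}^{n} f^{(r)}\cdot I^{r+1-\alpha}_+ g\,\Big] \;-\; f^{(n+1)}\cdot I^{n+1-\alpha}_+ g,
\]
the alternating signs now disappearing because $D^\alpha_+$ and $\tfrac{d}{dt}I^{r+1-\alpha}_+$ each contribute a minus.

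To pass to the limit $n\to\infty$ I would integrate, rather than differentiate termwise. Write $S^-_n$, $S^+_n$ for the two bracketed partial sums; both are absolutely continuous on $[a,b]$ (the only possibly non-$\mathcal{C}^1$ contribution is the one involving $I^{1-\alpha}_- f$, resp. $I^{1-\alpha}_+ g$, whose singular factor $(t-a)^{1-\alpha}$, resp. $(b-t)^{1-\alpha}$, is still absolutely continuous), and $S^-_n(a)$ and $S^+_n(b)$ are independent of $n$ since $I^{r+1-\alpha}_- f(a)=I^{r+1-\alpha}_+ g(b)=0$ (these values even vanish when $0<\alpha<1$). Hence $S^-_n(t)=S^-_n(a)+\int_a^t (S^-_n)'$, where by the induction identity $(S^-_n)' = D^\alpha_- f\cdot g - (-1)^{n+1} I^{n+1-\alpha}_- f\cdot g^{(n+1)}$; condition (C) makes the second summand tend to $0$ uniformly on $[a,b]$, so $S^-_n$ converges uniformly to $S^-(t):=S^-_n(a)+\int_a^t D^\alpha_- f\cdot g$, which satisfies $\tfrac{d}{dt}S^- = D^\alpha_- f\cdot g$ on $]a,b]$. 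The same argument, integrating from $b$, produces the uniform limit $S^+$ with $\tfrac{d}{dt}S^+ = -f\cdot D^\alpha_+ g$. Since both series converge, their sum equals $\sum_{r=0}^\infty\big[(-1)^r I^{r+1-\alpha}_- f\cdot g^{(r)} + f^{(r)}\cdot I^{r+1-\alpha}_+ g\big]$, and adding the two derivative relations gives \eqref{eqthmleibnizfrac}.

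The main obstacle is entirely analytic: justifying that $\tfrac{d}{dt}$ commutes with the infinite sum. A naive termwise differentiation would demand uniform convergence of the series of derivatives on the closed interval $[a,b]$, which is delicate near the endpoints where $D^\alpha_- f$ and $D^\alpha_+ g$ become singular; the trick of integrating the finite identity and letting $n\to\infty$ under the integral sign avoids this and uses precisely hypothesis (C), which is exactly the assertion that the level-$n$ remainders vanish uniformly. The supporting facts — the semigroup relation, the identity $I^\beta_\mp h = \pm\tfrac{d}{dt} I^{\beta+1}_\mp h$, and the continuity/vanishing of $I^\beta_\mp h$ at the relevant endpoint for $\beta>0$ — are classical and used freely throughout.
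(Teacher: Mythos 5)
Your proof is correct and follows essentially the same route as the paper: the same induction via the classical Leibniz rule yields the finite identity with remainder $(-1)^{p} I^{p-\alpha}_- f \cdot g^{(p)}$ (resp. $f^{(p)} \cdot I^{p-\alpha}_+ g$), and condition (C) is then used to pass to the limit. The only difference is cosmetic — you unpack the standard "uniform convergence of derivatives plus convergence at one point" theorem by integrating the finite identity, which also handles the endpoint singularity of $D^\alpha_\mp$ a bit more carefully than the paper does.
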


\begin{proof} 
Let $f$, $g \in \CC^{\infty} ([a,b],\R^d)$. By induction and using the classical Leibniz formula at each step, we prove that for any $p \in \N^*$, the following equalities both hold:
\begin{equation}\label{ddd}
D^\alpha_- f \cdot g = (-1)^p I^{p-\alpha}_- f \cdot g^{(p)} + \dfrac{d}{dt} \left[ \di \sum_{r=0}^{p-1} (-1)^r I^{r+1-\alpha}_- f \cdot g^{(r)} \right]
\end{equation}
and
\begin{equation}
- f \cdot D^\alpha_+ g = f^{(p)} \cdot I^{p-\alpha}_+ g +  \dfrac{d}{dt} \left[ \di \sum_{r=0}^{p-1} f^{(r)} \cdot I^{r+1-\alpha}_+ g \right].
\end{equation}
For any $p \in \N^*$, we define $u_p$ the following function:
\begin{equation}
u_p = \di \sum_{r=0}^{p-1} (-1)^r I^{r+1-\alpha}_- f \cdot g^{(r)} . 
\end{equation}
According to \eqref{ddd}, for any $p \in \N^*$, $ \dot{u}_p = D^\alpha_- f \cdot g - (-1)^p I^{p-\alpha}_- f \cdot g^{(p)} $. Hence, the assumption made on the sequence of functions $ (I^{p-\alpha}_- f \cdot g^{(p)})_{p \in \N^*} $ implies that the sequence $(\dot{u}_p)_{p \in \N^*}$ converges uniformly to $ D^\alpha_- f \cdot g $ on $ [a,b]$. Moreover, let us note that $(u_p)_{p \in \N^*}$ point-wise converges in $t=a$. Finally, we can conclude that the sequence $(u_p)_{p \in \N^*}$ converges uniformly on $ [a,b]$ to a function $u$ equal to
\begin{equation}
u = \di \sum_{r=0}^{\infty} (-1)^r I^{r+1-\alpha}_- f \cdot g^{(r)}  
\end{equation}
and satisfying $ \dot{u} = D^\alpha_- f \cdot g $. Similarly, one can prove that:
\begin{equation}
\dfrac{d}{dt} \left[ \di \sum_{r=0}^{\infty} f^{(r)} \cdot I^{r+1-\alpha}_+ g \right] = - f \cdot D^\alpha_+ g ,
\end{equation}
which completes the proof.
\end{proof}
Thus, combining Lemma \ref{lemcresson} and this transfer formula, we prove:

\begin{theorem}[A fractional Noether's theorem]
\label{thmnoetherfrac}
Let $L$ be a Lagrangian $D^\alpha_-$-invariant under the action of a one parameter group of diffeomorphisms $\Phi = \{ \phi (s,\cdot) \}_{s \in \R}$. Let $q$ be a solution of \eqref{elf} and let $f$ and $g$ denote:
\begin{equation}
f = \dfrac{\partial \phi}{\partial s} (0,q) \quad \text{and} \quad g = \dfrac{\partial L}{\partial v}(q,D^\alpha_- q,t).
\end{equation}
If $f$ and $g$ satisfy Condition (C), then the following equality holds:
\begin{equation}
\label{eqthmnoetherfrac}
\dfrac{d}{dt} \left[ \di \sum_{r=0}^{\infty} (-1)^r I^{r+1-\alpha}_- f \cdot g^{(r)} + f^{(r)} \cdot I^{r+1-\alpha}_+ g \right] = 0 .
\end{equation}
\end{theorem}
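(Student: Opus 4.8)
The plan is to combine the two ingredients just established: the vanishing identity of Lemma \ref{lemcresson} and the Transfer formula of Theorem \ref{thmleibizfrac}. First I would apply Lemma \ref{lemcresson}: since $L$ is $D^\alpha_-$-invariant under $\Phi$ and $q$ solves \eqref{elf}, equality \eqref{eqlemcresson} holds on $[a,b]$, and with the notation $f=\partial\phi/\partial s(0,q)$ and $g=\partial L/\partial v(q,D^\alpha_- q,t)$ it reads precisely $D^\alpha_- f\cdot g - f\cdot D^\alpha_+ g = 0$.

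Next I would invoke the Transfer formula. By hypothesis $f$ and $g$ satisfy condition (C) --- and are regular enough for all the derivatives $f^{(r)}$, $g^{(r)}$ entering that condition to make sense --- so Theorem \ref{thmleibizfrac} applies and rewrites the left-hand side of the identity above as $\frac{d}{dt}$ of the bracketed series appearing in \eqref{eqthmnoetherfrac}. Equating the two expressions for $D^\alpha_- f\cdot g - f\cdot D^\alpha_+ g$ then yields \eqref{eqthmnoetherfrac} at once; one may additionally read off the associated first integral $C(q)=\sum_{r\geq 0}\big[(-1)^r I^{r+1-\alpha}_- f\cdot g^{(r)} + f^{(r)}\cdot I^{r+1-\alpha}_+ g\big]$.

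The proof is thus a direct splice of the two previous results and I do not expect a genuine obstacle: all the analytic work (the induction and the uniform-convergence passage to the limit) lives in the proof of the Transfer formula, while the symmetry input is entirely encapsulated in Lemma \ref{lemcresson}. The only delicate point is bookkeeping on regularity, since Theorem \ref{thmleibizfrac} is stated for $\mathcal{C}^\infty$ functions: strictly one should assume $\phi$, $L$ and $q$ smooth enough that $f$ and $g$ belong to $\mathcal{C}^\infty([a,b],\R^d)$, or simply observe that condition (C), which mentions $f^{(p)}$ and $g^{(p)}$ for every $p\in\N^*$, already presupposes this.
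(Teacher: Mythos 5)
Your proposal is correct and is exactly the paper's argument: the authors simply state that Theorem \ref{thmnoetherfrac} follows by combining Lemma \ref{lemcresson} with the Transfer formula (Theorem \ref{thmleibizfrac}), which is precisely the splice you describe. Your remark on the implicit smoothness of $f$ and $g$ required by condition (C) is a reasonable bookkeeping observation that the paper leaves tacit.
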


This theorem provides an explicit algorithmic way to compute a constant of motion for any fractional Lagrangian systems admitting a symmetry. An arbitrary closed approximation of this quantity can be obtained with a truncature of the infinite sum.

\subsection{Comments}

\subsubsection{About previous studies}\label{section141}
Previous results in this direction have been obtained by several authors, see \cite{torr3,atan,musl}. Let us note that these authors consider symmetries modifying also the time variable in contrary to this paper. Nevertheless, considering symmetries not involving a time transformation, Lemma \ref{lemcresson} and their results coincide. Then, let us make the following comments explaining the interest of Theorem \ref{thmnoetherfrac} in this case:
\begin{itemize}
\item In \cite{torr3}, Torres \textit{et al.} define a \textit{fractional conservation law} for fractional Lagrangian systems. Precisely, denoting by $D^{\alpha}_0$ the bilinear operator given by:
\begin{equation}
D^\alpha_0 (f,g) = D^\alpha_- f \cdot g - f \cdot D^\alpha_+ g,
\end{equation} 
authors give the following definition: a function $\fonction{C}{\R^d \times \R^d \times [a,b]}{\R}{(x,v,t)}{C(x,v,t)}$ is a \textit{fractional-conserved quantity} of a fractional system if it is possible to write $C$ in the form $C(x,v,t)=C^1 (x,v,t) \cdot C^2 (x,v,t) $ with $D^\alpha_0 \big( C^1 (q,D^\alpha_- q,t), C^2 (q,D^\alpha_- q,t) \big) = 0 $ for any solution $q$ of the considered system. They deduce easily that $\partial \phi / \partial s (0,x) \cdot \partial L/\partial v (x,v,t)$ is then a fractional-conserved quantity of \eqref{elf}. Two important difficulties appear:
\begin{itemize}
\item Although all these notions and results coincide with the classical ones when $\alpha =1$, we do not have that $D^\alpha_0 (f,g) = 0$ implies $f \cdot g$ is constant. 
\item The decomposition $C=C^1 \cdot C^2$ is not unique and it leads to many issues. Indeed, the scalar product is commutative and then $C=C^1 \cdot C^2 = C^2 \cdot C^1$. Nevertheless, the operator $D^\alpha_0$ is not symmetric! Hence, in the result of Torres \textit{et al}, we have:
\begin{equation}
C(x,v,t) = \dfrac{\partial \phi}{\partial s} (0,x) \cdot \dfrac{\partial L}{\partial v} (x,v,t) = \dfrac{\partial L}{\partial v} (x,v,t) \cdot \dfrac{\partial \phi}{\partial s} (0,x)
\end{equation}
is a fractional-conserved quantity of \eqref{elf} but we do not have:
\begin{equation}
D^\alpha_0 \left(\dfrac{\partial L}{\partial v} (q,D^\alpha_- q,t), \dfrac{\partial \phi}{\partial s} (0,q) \right) = 0.
\end{equation}
\end{itemize}
Let us note that the work developed in \cite{musl} is similar to this previous one.
\item In \cite{atan}, Atanackovi\`c \textit{et al.} obtain in \cite[Theorem 15]{atan} a formulation of a constant of motion for a fractional Lagrangian systems admitting a symmetry: for any solution $q$ of \eqref{elf}, let $f = \partial \phi / \partial s (0,q)$ and $g = \partial L / \partial v (q,D^\alpha_- q,t)$, the following element $ t \longmapsto \int_a^t D^\alpha_- f \cdot g - f \cdot D^\alpha_+ g \; dy $ is constant on $[a,b]$. This result is then unsatisfactory because the constant of motion is not explicit. 
\end{itemize}
Hence, these definitions and results define \textit{implicitly} a constant of motion by functional relations (see \cite{torr3,musl}) or integral relations (see \cite{atan}). Theorem \ref{thmnoetherfrac} is then interesting because it derives \textit{explicitly} a constant of motion in terms of symmetry group and Lagrangian. \\

Let us make the following remark concerning the time transformation. In each of these papers \cite{torr3,atan,musl}, authors finally prove that a symmetry of a fractional Lagrangian system (with time transformation) implies an equality of the following type:
\begin{equation}
D^\alpha_- f \cdot g - f \cdot D^\alpha_+ g = 0,
\end{equation}
where $f$ depends on the symmetry group and $g$ depends on the Lagrangian $L$. Consequently, the transfer formula given in Theorem \ref{thmleibizfrac} is also relevant and then an explicit constant of motion can be obtained. Finally, the study developed in this paper is also available in the case of time transformation and it will be done in a forthcoming paper. \\

\subsubsection{About Condition (C)} 
Condition (C) could seem strong or too particular. In order to make it more concrete and understand what classes of functions satisfy it, we give the two following sufficient conditions:

\begin{proposition}\label{propcondc}
Let $f$, $g \in \CC^{\infty} ([a,b],\R^d)$. 
\begin{enumerate}
\item If $f$ and $g$ satisfy the two following conditions:
\begin{equation}
\max\limits_{t \in [a,b]} \left( \dfrac{(b-t)^{p-1}}{(p-1)!} \Vert f^{(p)} (t) \Vert \right) \limit\limits_{p \to \infty} 0 \quad \text{and} \quad \max\limits_{t \in [a,b]} \left( \dfrac{(t-a)^{p-1}}{(p-1)!} \Vert g^{(p)} (t) \Vert \right) \limit\limits_{p \to \infty} 0
\end{equation}
then $f$ and $g$ satisfy Condition (C).
\item If there exists $M > 0$ such that:
\begin{equation}
\forall p \in \N^*, \; \forall t \in [a,b], \; \Vert f^{(p)} (t) \Vert \leq M \quad \text{and} \quad  \Vert g^{(p)} (t) \Vert \leq M
\end{equation}
then $f$ and $g$ satisfy Condition (C).
\end{enumerate}
\end{proposition}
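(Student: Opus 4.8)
The plan is to reduce both statements to a single elementary $L^\infty$-bound on the Riemann--Liouville integrals. First I would record that for any continuous $h \in \mathcal{C}^0([a,b],\R^d)$ and any exponent $\beta > 0$,
\[
\forall t \in [a,b], \qquad |I^\beta_- h(t)| \leq \frac{\Vert h\Vert_\infty \,(t-a)^\beta}{\Gamma(\beta+1)}, \qquad |I^\beta_+ h(t)| \leq \frac{\Vert h\Vert_\infty \,(b-t)^\beta}{\Gamma(\beta+1)},
\]
which follows at once by pulling $\Vert h\Vert_\infty$ out of the defining integral and using $\int_a^t (t-y)^{\beta-1}\,dy = (t-a)^\beta/\beta$ (with the convention $I^0_\pm = \mathrm{Id}$ taking care of the borderline exponent $\beta = p-\alpha = 0$, i.e. $p=1,\alpha=1$). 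Applying this with $h=f$, $\beta = p-\alpha$ yields the pointwise estimate $|I^{p-\alpha}_- f(t)\cdot g^{(p)}(t)| \leq \Vert f\Vert_\infty (t-a)^{p-\alpha}|g^{(p)}(t)|/\Gamma(p-\alpha+1)$, and symmetrically $|f^{(p)}(t)\cdot I^{p-\alpha}_+ g(t)| \leq \Vert g\Vert_\infty (b-t)^{p-\alpha}|f^{(p)}(t)|/\Gamma(p-\alpha+1)$. Everything then reduces to estimating these right-hand sides uniformly in $t$.

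For item~2, I would simply insert $|f^{(p)}|\leq M$ and $|g^{(p)}|\leq M$ and bound $(t-a)^{p-\alpha}$ and $(b-t)^{p-\alpha}$ by $(b-a)^{p-\alpha}$, which leaves the quantity $M\Vert f\Vert_\infty (b-a)^{p-\alpha}/\Gamma(p-\alpha+1)$ (resp. with $\Vert g\Vert_\infty$), independent of $t$. This tends to $0$ as $p\to\infty$, since the ratio of consecutive terms is $(b-a)/(p+1-\alpha)\to 0$; hence the sequences in condition~(C) converge uniformly to $0$.

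For item~1, the point is to compare $(t-a)^{p-\alpha}/\Gamma(p-\alpha+1)$ with the sequence occurring in the hypothesis. Writing $(t-a)^{p-\alpha} = (t-a)^{1-\alpha}(t-a)^{p-1}$ and $\Gamma(p)=(p-1)!$, one gets
\[
\frac{(t-a)^{p-\alpha}}{\Gamma(p-\alpha+1)} = (t-a)^{1-\alpha}\cdot \frac{\Gamma(p)}{\Gamma(p+1-\alpha)}\cdot \frac{(t-a)^{p-1}}{(p-1)!} \leq (b-a)^{1-\alpha}\,\frac{1}{\Gamma(2-\alpha)}\cdot \frac{(t-a)^{p-1}}{(p-1)!},
\]
where I used $(t-a)^{1-\alpha}\leq (b-a)^{1-\alpha}$ (valid because $1-\alpha\geq 0$) and the fact that $p\mapsto \Gamma(p)/\Gamma(p+1-\alpha)$ is decreasing --- its consecutive ratio equals $p/(p+1-\alpha)<1$ --- hence is bounded by its first term $1/\Gamma(2-\alpha)$. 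Multiplying by $|g^{(p)}(t)|$, taking $\sup_{t\in[a,b]}$, and invoking the second hypothesis of item~1 gives $\Vert I^{p-\alpha}_- f\cdot g^{(p)}\Vert_\infty \to 0$; the symmetric computation with $(b-t)$ in place of $(t-a)$ and the first hypothesis handles $\Vert f^{(p)}\cdot I^{p-\alpha}_+ g\Vert_\infty$. I do not expect a genuine obstacle: the only care needed is to exhibit a constant independent of $p$ in front of the comparison --- i.e. the uniform boundedness of $\Gamma(p)/\Gamma(p+1-\alpha)$ --- and to handle the degenerate exponent $p-\alpha=0$ in the preliminary bound, both addressed above.
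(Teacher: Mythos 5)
Your proof is correct and follows essentially the same route as the paper: the pointwise bound $|I^{p-\alpha}_- f(t)|\leq \Vert f\Vert_\infty (t-a)^{p-\alpha}/\Gamma(p+1-\alpha)$ followed by the comparison $\Gamma(p+1-\alpha)\geq (p-1)!\,\Gamma(2-\alpha)$ (your monotonicity of $\Gamma(p)/\Gamma(p+1-\alpha)$ is exactly this inequality). The only cosmetic difference is that you prove item~2 directly from the same estimate, whereas the paper deduces it from item~1; both are immediate.
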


\begin{proof}
1. Let us prove that the sequence of functions $(I^{p-\alpha}_- f \cdot g^{(p)})_{p \in \N^*}$ converges uniformly to $0$. The proof is similar for the sequence $( f^{(p)} \cdot I^{p-\alpha}_- g)_{p \in \N^*}$. Let us denote $M = \max\limits_{t \in [a,b]} (\Vert f(t) \Vert)$. For any $ p \in \N^*$ and any $t \in [a,b]$:
\begin{equation}
\begin{array}{rcl}
\left| I^{p-\alpha}_- f (t) \cdot g^{(p)}(t) \right| & = & \left| \dfrac{g^{(p)}(t)}{\Gamma (p-\alpha )} \cdot \di \int_a^t (t-y)^{p-\alpha-1} f(y) \; dy \right| \\
& \leq & M \dfrac{(t-a)^{p-\alpha}}{\Gamma (p+1-\alpha )} \Vert g^{(p)} (t) \Vert.
\end{array}
\end{equation}
Finally, since $\Gamma (p+1-\alpha ) \geq (p-1)! \, \Gamma (2-\alpha )$:
\begin{equation}
\begin{array}{rcl}
\left| I^{p-\alpha}_- f (t) \cdot g^{(p)}(t) \right| & \leq & M \dfrac{(t-a)^{1-\alpha}}{\Gamma (2-\alpha )} \dfrac{(t-a)^{p-1}}{(p-1)!} \Vert g^{(p)} (t) \Vert \\
& \leq & M \dfrac{(b-a)^{1-\alpha}}{\Gamma (2-\alpha )} \dfrac{(t-a)^{p-1}}{(p-1)!} \Vert g^{(p)} (t) \Vert,
\end{array}
\end{equation}
which concludes the proof. \\
2. It follows from the first result.
\end{proof}

For example, if $f$ is polynomial and $g$ is the exponential function, one can prove that $f$ and $g$ satisfy the point $2$ of Proposition \ref{propcondc} and then Condition (C). If $g(t)=1/t$ on $[a,b]$ with $a >0$, one can notice that the point $2$ of Proposition \ref{propcondc} is not satisfied anymore. Nevertheless, the point $1$ is true and consequently the functions $f$ and $g$ satisfy Condition (C). In general, every couple of analytic functions satisfy Condition (C) and consequently \eqref{eqthmleibnizfrac} is true.

\section{A discrete fractional Noether's theorem}
\label{section2} 

We study the existence of discrete conservation laws for discrete fractional Lagrangian systems in the sense of \cite{bour}. Using the same strategy as in the continuous case, we introduce the notion of discrete symmetry and prove a discrete fractional Noether's theorem providing an {\it explicit computable} discrete constant of motion. 

\subsection{Reminder about discrete fractional Lagrangian systems} 

We follow the definition of discrete fractional Lagrangian systems given in \cite{bour} to which we refer for more details. \\

Let us consider $N \in \N^*$, $h = (b-a)/N$ the step size of the discretization and $\tau = (t_k)_{k=0,\dots,N} = (a+kh)_{k=0,\dots,N} $ the usual partition of the interval $[a,b]$. Let us define $\Delta_-^{\alpha}$ and $\Delta_+^{\alpha}$ the following discrete analogous of $D^{\alpha}_{-}$ and $D^{\alpha}_{+}$ respectively:
\begin{equation}
\fonction{\Delta^{\alpha} _-}{( \R^d ) ^{N+1}}{( \R^d ) ^{N}}{Q}{\left( \dfrac{1}{h^{\alpha}} \displaystyle \sum_{r=0}^{k} \alpha_r Q_{k-r} \right) _{k=1,..,N},}
\end{equation}
\begin{equation}
\fonction{\Delta^{\alpha} _+}{( \R^d ) ^{N+1}}{( \R^d ) ^{N}}{Q}{\left( \dfrac{1}{h^{\alpha}} \displaystyle \sum_{r=0}^{N-k} \alpha_r Q_{k+r} \right) _{k=0,..,N-1},}
\end{equation}
where the elements $(\alpha_r)_{r \in \N}$ are defined by $\alpha_0 = 1$ and 
\begin{equation}\label{eqalphar}
\forall r \in \N^*, \; \alpha_r = \dfrac{(-\alpha)(1-\alpha)\dots(r-1-\alpha)}{r!}.
\end{equation}
These discrete fractional operators are approximations of the continuous ones. We refer to \cite{podl} for more details. \\

A discrete fractional Euler-Lagrange equation, of unknown $Q \in ( \R^d ) ^{N+1}$, is defined by:
\begin{equation}
\label{elfh}\tag{EL${}^\alpha_h$}
 \di \frac{\partial L}{\partial x} ( Q,\Delta^{\alpha} _- Q,\tau ) + \Delta^{\alpha} _+ \left( \frac{\partial L}{\partial v} ( Q,\Delta^{\alpha} _- Q,\tau) \right) = 0, 
\end{equation}
where $L$ is a Lagrangian. In such a case, we speak of a discrete Lagrangian system.\\

The terminology is justified by the fact that solutions of \eqref{elfh} correspond to discrete critical points of the discrete fractional Lagrangian functional defined by:
\begin{equation}
\label{eqdlagfuncfrac}
\fonction{\mathcal{L}^{\alpha}_h}{( \R^{d} )^{N+1}}{\R}{Q}{h \di \sum_{k=1}^N L \big( Q_k,(\Delta^{\alpha} _{-} Q)_k,t_k \big).}
\end{equation}

We refer to \cite{bour} for a proof.\\

In the case $\alpha = 1$, the discrete operators $\Delta^1_\pm$ correspond to the implicit and explicit Euler approximations of $d/dt$ and \eqref{elfh} is just the discrete Euler-Lagrange equation obtained in \cite{mars,lubi}.

\subsection{Discrete symmetries} 

Here again, a discrete symmetry of a discrete fractional Lagrangian system is based on the action of a one parameter group of transformations on the associated Lagrangian:

\begin{definition}
\label{definvariancedfrac}
Let $\Phi = \{ \phi (s,\cdot) \}_{s \in \R}$ be a one parameter group of diffeomorphisms and let $L$ be a Lagrangian. $L$ is said to be $\Delta^\alpha_-$-invariant under the action of $\Phi$ if it satisfies:
\begin{equation}\label{eqinvarfracd}
\forall Q \text{ solution of \eqref{elfh}},  \; \forall s \in \R, \; L \Big( \phi(s,Q), \Delta^\alpha_- \big( \phi (s,Q) \big),\tau \Big) = L \big( Q,\Delta^\alpha_- Q,\tau \big) .
\end{equation}
\end{definition}

Then, we can prove the following discrete version of Lemma \ref{lemcresson}:

\begin{lemma}
\label{lemcressond}
Let $L$ be a Lagrangian $\Delta^\alpha_-$-invariant under the action of a one parameter group of diffeomorphisms $\Phi = \{ \phi (s,\cdot) \}_{s \in \R}$. Then, the following equality holds for any solution $Q$ of \eqref{elfh}:
\begin{equation}\label{eqlemcressond}
\Delta^\alpha_- \left( \dfrac{\partial \phi}{\partial s} (0,Q) \right) \cdot \dfrac{\partial L}{\partial v}(Q,\Delta^\alpha_- Q,\tau) - \dfrac{\partial \phi}{\partial s} (0,Q) \cdot \Delta^\alpha_+ \left( \dfrac{\partial L}{\partial v}(Q,\Delta^\alpha_- Q, \tau) \right)  = 0 .
\end{equation}
\end{lemma}

\begin{proof} 
This proof is a direct adaptation to the discrete case of the proof of Lemma \ref{lemcresson}. Let $Q \in (\R^d)^{N+1}$ be a solution of \eqref{elfh}. Let us differentiate equation \eqref{eqinvarfracd} with respect to $s$ and invert the operators $\Delta^{\alpha}_-$ and $\partial / \partial s$. We finally obtain for any $s \in \R$ and any $k \in \{ 1,\dots,N-1\}$:
\begin{multline}\label{astast}
\Delta^{\alpha}_- \left( \dfrac{\partial \phi}{\partial s} (s,Q) \right)_k \cdot \dfrac{\partial L}{\partial v} \Big( \phi (s,Q_k), \Delta^\alpha_- \big( \phi (s,Q_k) \big),t_k \Big) \\ + \dfrac{\partial L}{\partial x} \Big( \phi (s,Q_k), \Delta^\alpha_- \big( \phi (s,Q_k) \big),t_k \Big) \cdot \dfrac{\partial \phi}{\partial s} (s,Q_k) = 0  .
\end{multline}
Since $\phi (0,\cdot) = Id_{\R^d}$ and $Q$ is a solution of \eqref{elfh}, taking $s=0$ in \eqref{astast} leads to \eqref{eqlemcressond}.
\end{proof}

\subsection{A discrete fractional Noether's theorem} 

Let us remind that our aim is to express explicitly a discrete constant of motion for discrete fractional Lagrangian systems admitting a discrete symmetry. As in the continuous case, our result is based on Lemma \ref{lemcressond}. Let us note that the following implication holds:
\begin{equation}\label{dconstant}
\forall F \in \R^{N+1}, \; \Delta^1_- F = 0 \Longrightarrow \exists c \in \R, \; \forall k=0,\dots,N, \; F_k = c.
\end{equation}
Namely, if the discrete derivative of $F$ vanishes, then $F$ is constant. Our aim is then to write \eqref{eqlemcressond} as a discrete derivative (\textit{i.e.} as $\Delta^1_-$ of an explicit quantity). \\

We first introduce some notations and definitions. The shift operator denoted by $\sigma$ is defined by
\begin{equation}
\fonction{\sigma}{(\R^d)^{N+1}}{(\R^d)^{N+1}}{Q}{\sigma(Q) = (Q_{k+1})_{k=0,\dots,N}}
\end{equation}
with the convention $Q_{N+1}=0$. We also introduce the following square matrices of length $(N+1)$. First, $A_{1} = - Id_{N+1}$ and then, for any $r \in \{ 2,\dots,N-1 \}$, the square matrices $A_r \in \mathcal{M}_{N+1}$ defined by:
\begin{equation}
\forall i,j=0,\dots,N, \; (A_r)_{i,j} = 
\left\lbrace \begin{array}{lcl}
0 & \text{if} & i=0 \\
\delta_{\{ j=0 \}} \delta_{\{  r \leq i \}} - \delta_{\{ 0 \leq i-j \leq r-1\}} \delta_{\{  1 \leq j \leq N-r \}} & \text{if} & 1 \leq i \leq N-1 \\
(A_r)_{N-1,j} & \text{if} & i = N
\end{array} \right.
\end{equation}
where $\delta$ is the Kronecker symbol. \\

For example, for $N=5$, the matrices $A_r$ are given by: $A_1 = -Id_6$ and
\begin{scriptsize}
\begin{equation*}
A_2 = \left(
\begin{array}{cccccc}
0 & 0 & 0 & 0 & 0 & 0 \\
0 & -1& 0 & 0 & 0 & 0 \\
1 & -1& -1& 0 & 0 & 0 \\
1 & 0 & -1& -1& 0 & 0 \\
1 & 0 & 0 & -1& 0 & 0 \\
1 & 0 & 0 & -1& 0 & 0 
\end{array} \right) , 
A_3 = \left(
\begin{array}{cccccc}
0 & 0 & 0 & 0 & 0 & 0 \\
0 & -1& 0 & 0 & 0 & 0 \\
0 & -1& -1& 0 & 0 & 0 \\
1 & -1& -1& 0 & 0 & 0 \\
1 & 0 & -1& 0 & 0 & 0 \\
1 & 0 & -1& 0 & 0 & 0 
\end{array} \right) , 
A_4 = \left(
\begin{array}{cccccc}
0 & 0 & 0 & 0 & 0 & 0 \\
0 & -1& 0 & 0 & 0 & 0 \\
0 & -1& 0 & 0 & 0 & 0 \\
0 & -1& 0 & 0 & 0 & 0 \\
1 & -1& 0 & 0 & 0 & 0 \\
1 & -1& 0 & 0 & 0 & 0 
\end{array} \right) 
.
\end{equation*}
\end{scriptsize}

Considering these previous elements, we state the following result:

\begin{theorem}
[A discrete fractional Noether's theorem]
\label{thmnoetherdfrac}
Let $L$ be a Lagrangian $\Delta^{\alpha}_-$-invariant under the action of a one parameter group of diffeomorphisms $\Phi = \{ \phi (s,\cdot) \}_{s \in \R}$. Then, the following equality holds for any solution $Q$ of \eqref{elfh}:
\begin{equation}\label{eqthmdfnoehter}
\Delta^1_- \left[ \di \sum_{r=1}^{N-1} \alpha_r A_r \left( \dfrac{\partial \phi}{\partial s} (0,Q) \cdot \sigma^{r}  \Big( \dfrac{\partial L}{\partial v}(Q,\Delta^\alpha_- Q,\tau) \Big) \right) \right] = 0.
\end{equation}
\end{theorem}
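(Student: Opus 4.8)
The plan is to mirror, in the discrete setting, the two-stage structure that produced Theorem~\ref{thmnoetherfrac} in the continuous case: first prove a \emph{discrete transfer formula} that rewrites the bilinear expression $\Delta^\alpha_- f \cdot g - f \cdot \Delta^\alpha_+ g$ as $\Delta^1_-$ of an explicit quantity, and then apply it with $f = \partial\phi/\partial s(0,Q)$ and $g = \partial L/\partial v(Q,\Delta^\alpha_- Q,\tau)$, invoking Lemma~\ref{lemcressond} to kill the left-hand side. The conclusion \eqref{eqthmdfnoehter} together with the implication \eqref{dconstant} then yields the announced discrete conservation law. So the whole theorem reduces to establishing the identity
\begin{equation*}
\Delta^\alpha_- f \cdot g - f \cdot \Delta^\alpha_+ g = \Delta^1_-\left[ \di \sum_{r=1}^{N-1} \alpha_r A_r \big( f \cdot \sigma^r(g) \big) \right]
\end{equation*}
for arbitrary $f,g \in (\R^d)^{N+1}$; no convergence hypothesis is needed because the sum is finite, which is exactly the advantage the discrete case enjoys over the continuous one.

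First I would unfold the definitions. Writing out $\Delta^\alpha_- f \cdot g$ at index $k$ gives $h^{-\alpha}\sum_{r=0}^k \alpha_r f_{k-r}\cdot g_k$ and similarly $f\cdot\Delta^\alpha_+ g$ at index $k$ gives $h^{-\alpha}\sum_{r=0}^{N-k}\alpha_r f_k \cdot g_{k+r}$, so that the difference, componentwise, is a double sum over the Grünwald--Letnikov coefficients $\alpha_r$. The idea is a discrete summation-by-parts / Abel-summation manipulation: reindex so that each term $\alpha_r f_j \cdot g_{j+r}$ appears, collect the coefficient of each product $f_j\cdot g_{j+r}$, and recognize the resulting telescoping pattern. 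Concretely, one shows that $\alpha_r f \cdot \sigma^r(g)$, after being hit by the matrix $A_r$ and then by $\Delta^1_- = (\text{difference at consecutive indices})$, contributes precisely the "left" piece $\alpha_r f_{k-r}\cdot g_k$ at the appropriate range of indices minus the "right" piece $\alpha_r f_k\cdot g_{k+r}$; summing over $r=1,\dots,N-1$ and adding the $r=0$ term (which cancels since $\alpha_0=1$ and it appears with opposite signs in the two halves) produces the bilinear expression. The matrices $A_r$ are engineered exactly so that $\Delta^1_-(A_r(\cdot))$ implements this bookkeeping, including the boundary corrections at $i=0$ (where the row is zero) and at $i=N$ (where the row is copied from $i=N-1$), which encode the conventions $Q_{N+1}=0$ and the fact that $\Delta^\alpha_\pm$ are defined only on index ranges $\{1,\dots,N\}$ and $\{0,\dots,N-1\}$ respectively.

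The cleanest way to carry this out is by induction on a partial-sum version, exactly as in the proof of Theorem~\ref{thmleibizfrac}: define $A_r$ via a recursion in $r$ (each $A_r$ built from $A_{r-1}$ by one more discrete "integration" step, matching how $I^{r+1-\alpha}_-$ is $I^{r-\alpha}_-$ followed by one antiderivative) and prove an identity of the form "partial bilinear expression up to order $p$ $=$ remainder term at order $p$ $+\ \Delta^1_-$ of the truncated sum", then let $p$ reach $N$ where the remainder vanishes because $\alpha_r$-weighted shifts run out of room. The main obstacle, and the step that needs genuine care rather than routine algebra, is verifying that the explicit piecewise formula given for $(A_r)_{i,j}$ really does reproduce the telescoping bookkeeping at every index, \emph{especially the boundary rows}: the definitions of $\Delta^\alpha_-$ and $\Delta^\alpha_+$ have mismatched index ranges ($\{1,\dots,N\}$ versus $\{0,\dots,N-1\}$), so the discrete analogue of "integration by parts" generates boundary terms at $k=0$ and $k=N$ that must be absorbed correctly — this is why $A_1=-\mathrm{Id}_{N+1}$ is special and why the rows $i=0$ and $i=N$ of $A_r$ are treated separately. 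I would handle this by checking the identity directly on the "building blocks" $f = e_j$ (indicator of index $j$) and $g = e_\ell$ (indicator of index $\ell$), reducing everything to a combinatorial identity among Kronecker deltas and the $\alpha_r$, and then extend by bilinearity. Once the transfer formula is pinned down, the rest of the proof is immediate: substitute, cite Lemma~\ref{lemcressond}, and cite \eqref{dconstant}.
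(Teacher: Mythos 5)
Your proposal takes essentially the same route as the paper: Lemma~\ref{lemcressond} plus a finite discrete ``transfer'' identity, which the paper establishes by extracting the $r=0,1$ terms (the $r=1$ term via the discrete Leibniz formula \eqref{eqleibnizd}), forming the discrete antiderivative $H_i = h\sum_{j=1}^{i} J_j$ of the order-$r\geq 2$ residual, and verifying by reindexing the double sums that its coefficients are exactly the $A_r(i,j)$ --- the same combinatorial bookkeeping you defer to a check on indicator sequences. The only slip is a missing factor $h^{1-\alpha}$ on the right-hand side of the general identity you state (the bilinear expression carries $h^{-\alpha}$ while $\Delta^1_-$ carries $h^{-1}$), which is immaterial here since the left-hand side vanishes on solutions by Lemma~\ref{lemcressond}.
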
 

Combining \eqref{dconstant} and \eqref{eqthmdfnoehter}, Theorem \ref{thmnoetherdfrac} provides a constant of motion for discrete fractional Lagrangian systems admitting a symmetry. Let us note that the discrete conservation law is not only explicit but computable in finite time. We give an example in the next section.\\

Before giving its proof, we remark that Theorem \ref{thmnoetherdfrac} takes a particular simple expression when $\alpha =1$. Indeed, since $\alpha_r = 0$ for any $r \geq 2$ in this case, we obtain:

\begin{theorem}[Discrete classical Noether's theorem]\label{thmnoetherdclass}
Let $L$ be a Lagrangian $\Delta^1_-$-invariant under the action of a one parameter group of diffeomorphisms $\Phi = \{ \phi (s,\cdot) \}_{s \in \R}$. Then, the following equality holds for any solution $Q$ of {\rm (EL${}^1_h$)}:
\begin{equation}\label{dcnoether}
\Delta^1_- \left[ \dfrac{\partial \phi}{\partial s} (0,Q) \cdot \sigma \left( \dfrac{\partial L}{\partial v} (Q,\Delta^1_- Q,\tau ) \right) \right]= 0 .
\end{equation}
\end{theorem}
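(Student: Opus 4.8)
The plan is to obtain this statement directly as the specialization $\alpha = 1$ of the general discrete fractional Noether's theorem (Theorem \ref{thmnoetherdfrac}), whose statement I am entitled to invoke. Under this reading the only genuine work is to verify that the sum $\sum_{r=1}^{N-1}\alpha_r A_r(\cdots)$ appearing in \eqref{eqthmdfnoehter} collapses to a single term when $\alpha=1$, and that this term is exactly the bracket in \eqref{dcnoether}.

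First I would inspect the coefficients $(\alpha_r)_{r\in\N}$ defined in \eqref{eqalphar}. For $\alpha=1$ one has $\alpha_0 = 1$ and $\alpha_1 = -\alpha/1! = -1$, whereas for every $r\geq 2$ the product defining $\alpha_r$ already contains the second factor $(1-\alpha)$, which vanishes; hence $\alpha_r = 0$ for all $r\geq 2$. Consequently every term with $r\geq 2$ disappears from the sum in \eqref{eqthmdfnoehter}, leaving only the $r=1$ contribution. Writing $f = \partial\phi/\partial s(0,Q)$ and $g = \partial L/\partial v(Q,\Delta^1_- Q,\tau)$, and using $A_1 = -Id_{N+1}$, this surviving term is $\alpha_1 A_1\big(f\cdot\sigma(g)\big) = (-1)(-Id_{N+1})\big(f\cdot\sigma(g)\big) = f\cdot\sigma(g)$. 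Since the operators $\Delta^\alpha_\pm$ reduce to $\Delta^1_\pm$ when $\alpha=1$, the conclusion \eqref{eqthmdfnoehter} becomes precisely $\Delta^1_-\big[f\cdot\sigma(g)\big]=0$, which is \eqref{dcnoether}.

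There is essentially no obstacle: the statement is an immediate corollary once the coefficient computation is carried out, and the only point requiring care is checking the index convention in \eqref{eqalphar} so as to be sure the factor $(1-\alpha)$ appears for \emph{every} $r\geq 2$. Because the proof of Theorem \ref{thmnoetherdfrac} is itself only stated (not yet established) at this point, I would also note a fully self-contained route that avoids invoking the general theorem: specialize Lemma \ref{lemcressond} to $\alpha=1$, which gives $\Delta^1_- f\cdot g - f\cdot\Delta^1_+ g = 0$ for any solution $Q$, and then verify the elementary discrete Leibniz (summation-by-parts) identity $\Delta^1_- f\cdot g - f\cdot\Delta^1_+ g = \Delta^1_-\big[f\cdot\sigma(g)\big]$ on the common index range by a one-line computation using $(\Delta^1_- f)_k = (f_k-f_{k-1})/h$ and $(\Delta^1_+ g)_k = (g_k-g_{k+1})/h$, both sides reducing to $(f_k g_{k+1} - f_{k-1}g_k)/h$. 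Combining these two facts yields \eqref{dcnoether} directly, and this is the exact discrete counterpart of how Theorem \ref{thmnoetherclass} follows from Lemma \ref{lemcresson} in the continuous classical case.
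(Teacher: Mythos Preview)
Your proposal is correct and mirrors the paper exactly: the authors introduce Theorem~\ref{thmnoetherdclass} precisely as the $\alpha=1$ specialization of Theorem~\ref{thmnoetherdfrac} (using $\alpha_r=0$ for $r\geq 2$, $\alpha_1=-1$, $A_1=-Id_{N+1}$), and then remark that it equivalently follows from Lemma~\ref{lemcressond} at $\alpha=1$ combined with the discrete Leibniz identity~\eqref{eqleibnizd}, which is your second route. Both your coefficient check and your one-line verification of \eqref{eqleibnizd} are correct.
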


This result is a reformulation of the discrete Noether's theorem proved in \cite{mars,lubi}. It corresponds to our Lemma \ref{lemcressond} with $\alpha=1$ using the following discrete Leibniz formula:
\begin{equation}\label{eqleibnizd}
\forall F,G \in (\R^d)^{N+1}, \; \Delta^1_- \big( F \cdot \sigma (G) \big) = \Delta^1_- F \cdot G - F \cdot \Delta^1_+ G . 
\end{equation}
Now, let us prove Theorem \ref{thmnoetherdfrac}:
\begin{proof}[Proof of Theorem \ref{thmnoetherdfrac}]
According to Lemma \ref{lemcressond}, equation \eqref{eqlemcressond} holds for any $k=1,\dots,N-1$. Let $F = \partial \phi / \partial s (0,Q)$ and $G = \partial L / \partial v (Q,\Delta^\alpha_- Q,\tau)$. Let us multiply \eqref{eqlemcressond} by $h^\alpha$ and obtain the following equality for any $k=1,\dots,N-1$:
\begin{equation}
\left( \di \sum_{r=0}^k \alpha_r F_{k-r} \right) \cdot G_k - F_k \cdot \left( \di \sum_{r=0}^{N-k} \alpha_r G_{k+r} \right) = 0,
\end{equation}
and since $\alpha_0 = 1$:
\begin{equation}
\alpha_1 ( F_{k-1} \cdot G_k - F_k \cdot G_{k+1} )_k + \left( \di \sum_{r=2}^k \alpha_r F_{k-r} \right) \cdot G_k - F_k \cdot \left( \di \sum_{r=2}^{N-k} \alpha_r G_{k+r} \right) = 0 ,
\end{equation}
then:
\begin{equation}\label{jk}
\alpha_1 \Delta^1_- \big( F \cdot \sigma(G) \big)_k = \dfrac{1}{h} \left[ \left( \di \sum_{r=2}^k \alpha_r F_{k-r} \right) \cdot G_k - F_k \cdot \left( \di \sum_{r=2}^{N-k} \alpha_r G_{k+r} \right) \right].
\end{equation}
Let us denote $J_k$ the right term of \eqref{jk} for any $k=1,\dots,N-1$. We are going to write $J_k$ as the discrete derivative of its discrete anti-derivative: it corresponds to the discrete version of the method of Atanackovi\`c in \cite{atan}. For any $k=1,\dots,N-1$, we obtain $ J_k = (\Delta^1_- H)_k $ where $H_i := h \sum_{j=1}^{i} J_j$ for any $i=0,\dots,N$ with the convention $ H_0 = J_N = 0 $. Let us provide an explicit formulation of the element $H = (H_i)_{i=0,\dots,N}$. 

$ \bullet $ \textit{Case} $1 \leq i \leq N-1$:
\begin{equation}\label{test2}
H_i = \di \sum_{j=1}^i \left[ \di \sum_{r=2}^j \alpha_r F_{j-r} \cdot G_j - \di \sum_{r=2}^{N-j} \alpha_r F_{j} \cdot G_{j+r} \right] = \di \sum_{j=2}^i \sum_{r=2}^j \alpha_r F_{j-r} \cdot G_j - \di \sum_{j=1}^i \sum_{r=2}^{N-j} \alpha_r F_{j} \cdot G_{j+r}.
\end{equation}
As $ \sum_{j=2}^i \sum_{r=2}^j = \sum_{r=2}^i \sum_{j=r}^{i}$, we have:
\begin{equation}
\di \sum_{j=2}^i \sum_{r=2}^j \alpha_r F_{j-r} \cdot G_j = \sum_{r=2}^i \sum_{j=r}^{i} \alpha_r F_{j-r} \cdot G_j = \sum_{r=2}^i \sum_{j=0}^{i-r} \alpha_r F_{j} \cdot G_{j+r} = \di \sum_{r=2}^i \sum_{j=0}^{i-r} \alpha_r \big( F \cdot \sigma^r (G) \big)_j.
\end{equation}
Then, since $ \sum_{j=1}^i \sum_{r=2}^{N-j} = \sum_{j=1}^i \sum_{r=2}^{N-i} + \sum_{j=1}^i \sum_{r=N+1-i}^{N-j} = \sum_{r=2}^{N-i} \sum_{j=1}^{i} + \sum_{r=N+1-i}^{N-1} \sum_{j=1}^{N-r} $:
\begin{equation}\label{test}
H_i = \di \sum_{r=2}^i \sum_{j=0}^{i-r} \alpha_r \big( F \cdot \sigma^r (G) \big)_j - \sum_{r=2}^{N-i} \sum_{j=1}^{i} \alpha_r \big( F \cdot \sigma^r (G) \big)_j  - \sum_{r=N+1-i}^{N-1} \sum_{j=1}^{N-r} \alpha_r \big( F \cdot \sigma^r (G) \big)_j.
\end{equation}
Finally, we have:
\begin{equation}
H_i = \di \sum_{r=2}^{N-1} \sum_{j=0}^N \alpha_r A_r (i,j) \big( F \cdot \sigma^r (G) \big)_j ,
\end{equation}
where the elements $\big( A_r (i,j) \big)$ are defined for $r=2,\dots,N-1$ and $j=0,\dots,N$ as the real coefficients in front of $\alpha_r \big( F \cdot \sigma^r (G) \big)_j$. Our aim is then to express the values of these elements. From \eqref{test}, we have for any $r=2,\dots,N-1$ and any $j=0,\dots,N$:
\begin{equation}
A_r(i,j) = \delta_{\{r \leq i\}} \delta_{\{ 0 \leq j \leq i-r \}} - \delta_{\{ r \leq N-i \}} \delta_{\{ 1 \leq j \leq i \}} - \delta_{\{  N+1 - i \leq r \}} \delta_{\{ 1 \leq j \leq N-r \}}.
\end{equation}
For example, for $j=0$, we have:
\begin{equation}\label{j0}
\forall r=2,\dots,N-1, \; A_r(i,0) = \delta_{\{ r \leq i \} }.
\end{equation}
Let $r \in \{2,\dots,N-1\}$ and $j \in \{1,\dots,N\}$. Let us prove that:
\begin{equation}\label{goodj1}
A_r(i,j) = -\delta_{\{ 0 \leq i-j \leq r-1\}} \delta_{\{  j \leq N-r \}} = -\delta_{\{j \leq i\}} \delta_{\{i-j \leq r-1\}} \delta_{\{  j \leq N-r \}}.
\end{equation}
Let us see the four following cases:
\begin{itemize}
\item if $j>i$, then $j>i-r$. Moreover, if $N+1-i \leq r$ then $j > N-r$. In this case, $A_r(i,j) = 0-0-0=0$.
\item if $i-j > r -1$ then $r \leq i$, $j \leq i-r$, $j\leq i$ and $j \leq N-r$. Then, $A_r (i,j)=1-1-0=0$ or $A_r (i,j)=1-0-1=0$ depending on $r \leq N-i$ or $N+1-i \leq r$. Finally, in this case, $A_r (i,j)=0$.
\item if $j > N-r$ then $j > i-r$. Moreover, if $r \leq N-i$ then $j > i$. In this case, $A_r(i,j) = 0-0-0=0$.
\item if $ j \leq i $, $ i-j \leq r-1 $ and $ j \leq N-r $, then $i-r <j$. In this case, $A_r (i,j)=0-1-0=-1$ or $A_r (i,j)=0-0-1=-1$ depending on $r \leq N-i$ or $N+1-i \leq r$. Finally, in this case, $A_r (i,j)=-1$.
\end{itemize}
Consequently, \eqref{goodj1} holds for any $r \in \{2,\dots,N-1\}$ and any $j \in \{1,\dots,N\}$. Finally, from \eqref{goodj1} and \eqref{j0}, we have:
\begin{equation}\label{goodj0}
\forall r=2,\dots,N-1, \; \forall j=0,\dots,N, \; A_r(i,j) = \delta_{\{ j=0 \}} \delta_{\{  r \leq i \}} - \delta_{\{ 0 \leq i-j \leq r-1\}} \delta_{\{  1 \leq j \leq N-r \}} .
\end{equation}
$ \bullet $ \textit{Case} $i=0$ or $i=N$. As $H_0 = 0$, for any $r=2,\dots,N-1$ and any $j=0,\dots,N$, we define $A_r(0,j)=0$. As $H_N = H_{N-1}$, for any $r=2,\dots,N-1$ and any $j=0,\dots,N$, we define $A_r(N,j)=A_r(N-1,j)$. \\

Hence, for any $r=2,\dots,N-1$, the elements $\big(A_r(i,j)\big)$ are defined for $i=0,\dots,N$ and $j=0,\dots,N$. Then, we denote by $A_r$ the matrix $\big(A_r(i,j)\big)_{0\leq i,j \leq N} \in \mathcal{M}_{N+1}$. Finally, from \eqref{jk}, we have proved that $ \Delta^1_- \big( - \alpha_1 F\cdot\sigma(G) + H \big) = 0 $ where $H = \sum_{r=2}^{N-1} \alpha_r A_r \big( F \cdot \sigma^r (G) \big)$. Finally, denoting $A_1 = -Id_{N+1} \in \mathcal{M}_{N+1}$, we conclude the proof.
\end{proof}

\begin{remark}
Using the discrete Leibniz formula given by \eqref{eqleibnizd}, another choice in order to give a discrete version of \eqref{eqthmnoetherfrac} from Lemma \ref{lemcressond} would be to apply the discrete version of the method used in Section \ref{section13}. Nevertheless, we would encounter many numerical difficulties. Firstly, such a method would imply the use of the operator $\Delta^p_-$ but this operator approximates the operator $(d/dt)^p$ only for $t_k$ with $k \geq p$. For the $p$ first terms, we obtain in general a numerical blow up. Secondly, the use of operator $\Delta^p_-$ implies the use of $h^{-p}$. Hence, for a large enough $p$, we exceed the machine precision. 
\end{remark}

\subsection{Example: the discrete fractional harmonic oscillator} 

We consider the classical bi-dimensional example ($d=2$) of the quadratic Lagrangian $L (x,v,t) = (x^2 + v^2)/2$ with $[a,b]=[0,1]$ and $\alpha = 1/2$. Then, $L$ is $\Delta^\alpha_-$-invariant under the action of the rotations given by:
\begin{equation}\label{rotation}
\fonction{\phi}{\R \times \R^2}{\R^2}{(s,x_1,x_2)}{\left( \begin{array}{cc} \cos (s) & - \sin (s ) \\ \sin (s) & \cos (s )  \end{array} \right) \left( \begin{array}{c} x_1 \\ x_2 \end{array} \right).}
\end{equation}
We choose $N=600$, $Q_0 = (1,2)$ and $Q_N = (2,1)$. Let $F$ and $G$ denote:
\begin{equation}
F = \dfrac{\partial \phi}{\partial s} (0,Q) = (-Q^2,Q^1) \quad \text{and} \quad G = \dfrac{\partial L}{\partial v}(Q,\Delta^\alpha_- Q,\tau) = (\Delta^\alpha_- Q^1,\Delta^\alpha_- Q^2).
\end{equation}
The computation of \eqref{elfh} gives the following graphics: 
\begin{figure}[h]
  \centering
  \subfloat[Solution $Q= (Q^1,Q^2)$ of \eqref{elfh}]{\label{Q1Q2} \includegraphics[width=0.5\textwidth]{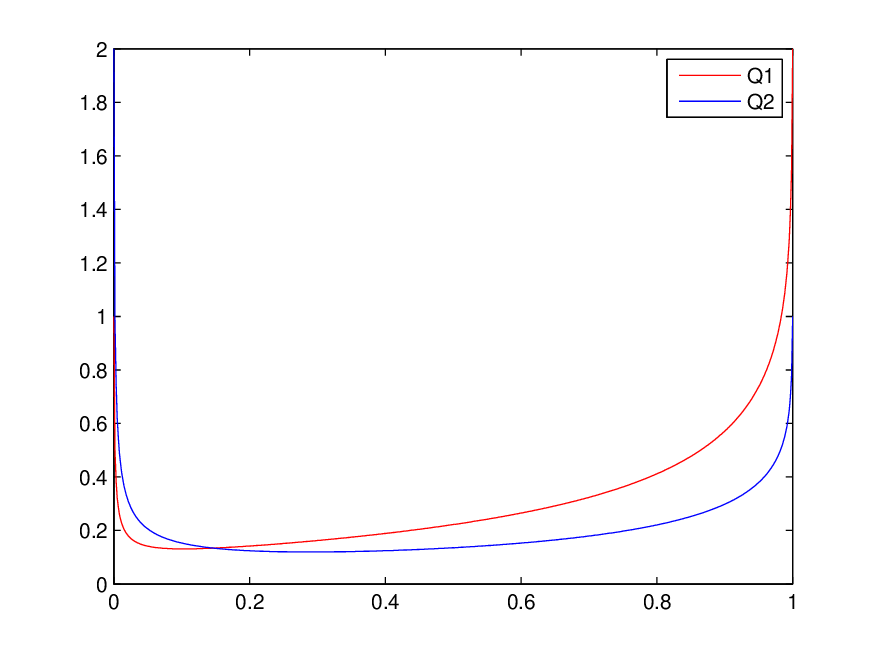}}                
  \subfloat[Quantity $\sum_{r=1}^{N-1} \alpha_r A_r \big( F \cdot \sigma^{r}  (G) \big)$]{\label{constant} \includegraphics[width=0.5\textwidth]{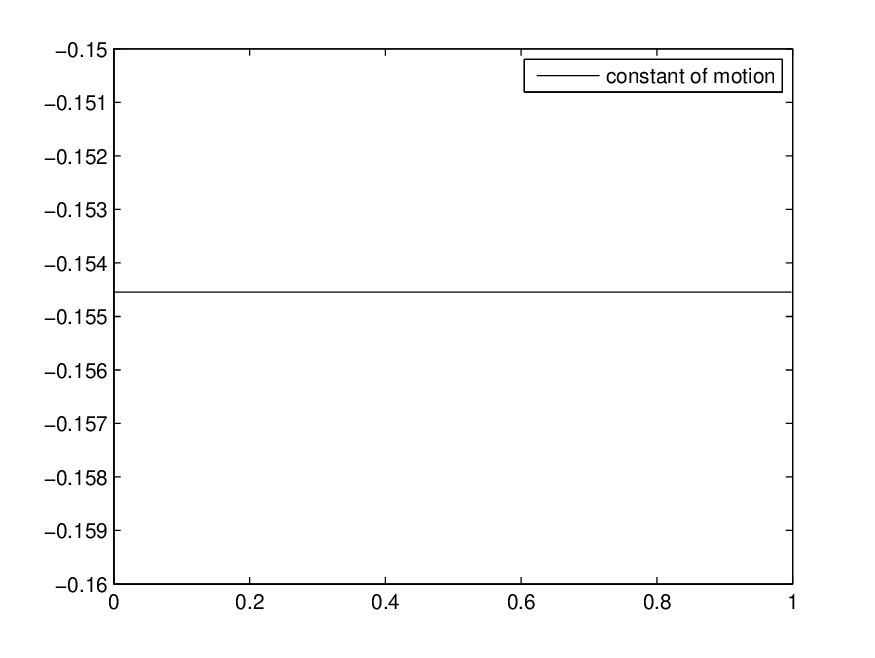}}
  \caption{Computation of \eqref{elfh}}
\end{figure}
\begin{itemize}
\item Figure \ref{Q1Q2} represents the discrete solutions of \eqref{elfh}
\item and Figure \ref{constant} represents the explicit computable quantity $\sum_{r=1}^{N-1} \alpha_r A_r \big( F \cdot \sigma^{r}  (G) \big)$. As expected from Theorem \ref{thmnoetherdfrac}, this quantity is a constant of motion of \eqref{elfh}.
\end{itemize}

\section{Conclusion} 

In this paper, we have proved a fractional Noether's theorem for fractional Lagrangian systems invariant under a symmetry group both in the continuous and discrete cases. In contrary to previous results in this direction (in the continuous case), it provides an \textit{explicit} conservation law. Moreover, the given formula can be algorithmically implemented and in the discrete case, the conservation law is moreover computable in a finite number of steps. \\

As explained in Section \ref{section141}, symmetries with time transformation can be considered and the transfer formula given by Theorem \ref{thmleibizfrac} can be used in order to obtain an explicit constant of motion. Hence, this study is also available in this case and it will be done in a forthcoming paper.
 
\bibliographystyle{plain}
%\bibliography{../bibliototal}

\begin{thebibliography}{10}

\bibitem{agra}
O.P. Agrawal.
\newblock Formulation of {E}uler-{L}agrange equations for fractional
  variational problems.
\newblock {\em J. Math. Anal. Appl.}, 272(1):368--379, 2002.

\bibitem{alme}
R.~Almeida, A.B. Malinowska, and D.F.M. Torres.
\newblock A fractional calculus of variations for multiple integrals with
  application to vibrating string.
\newblock {\em J. Math. Phys.}, 51(3):033503, 12, 2010.

\bibitem{arno}
V.~I. Arnold.
\newblock {\em Mathematical methods of classical mechanics}, volume~60 of {\em
  Graduate Texts in Mathematics}.
\newblock Springer-Verlag, New York, 1979.

\bibitem{atan}
T.M. Atanackovi{\'c}, S.~Konjik, S.~Pilipovi{\'c}, and S.~Simi{\'c}.
\newblock Variational problems with fractional derivatives: invariance
  conditions and {N}oether's theorem.
\newblock {\em Nonlinear Anal.}, 71(5-6):1504--1517, 2009.

\bibitem{bagl}
R.~L. Bagley and R.~A. Calico.
\newblock Fractional order state equations for the control of viscoelastically
  damped structures.
\newblock {\em Journal of Guidance, Control, and Dynamics}, 14:304--311, 1991.

\bibitem{bale2}
D.~Baleanu and S.I. Muslih.
\newblock Lagrangian formulation of classical fields within
  {R}iemann-{L}iouville fractional derivatives.
\newblock {\em Phys. Scripta}, 72(2-3):119--121, 2005.

\bibitem{boni}
B.~Bonilla, M.~Rivero, L.~Rodr{\'{\i}}guez-Germ{\'a}, and J.~J. Trujillo.
\newblock Fractional differential equations as alternative models to nonlinear
  differential equations.
\newblock {\em Appl. Math. Comput.}, 187(1):79--88, 2007.

\bibitem{bour}
L.~Bourdin, J.~Cresson, I.~Greff, and P.~Inizan.
\newblock Variational integrators on fractional {L}agrangian systems in the
  framework of discrete embeddings.
\newblock {\em preprint arXiv:1103.0465v1 [math.DS]}.

\bibitem{comt}
F.~Comte.
\newblock Op\'erateurs fractionnaires en \'econom\'etrie et en finance.
\newblock {\em Pr\'epublication MAP5}, 2001.

\bibitem{cres6}
J.~Cresson.
\newblock Fractional embedding of differential operators and {L}agrangian
  systems.
\newblock {\em J. Math. Phys.}, 48(3):033504, 34, 2007.

\bibitem{torr3}
G.S.F. Frederico and D.F.M. Torres.
\newblock A formulation of {N}oether's theorem for fractional problems of the
  calculus of variations.
\newblock {\em J. Math. Anal. Appl.}, 334(2):834--846, 2007.

\bibitem{lubi}
E.~Hairer, C.~Lubich, and G.~Wanner.
\newblock {\em Geometric numerical integration}, volume~31 of {\em Springer
  Series in Computational Mathematics}.
\newblock Springer-Verlag, Berlin, second edition, 2006.
\newblock Structure-preserving algorithms for ordinary differential equations.

\bibitem{hilf3}
R.~Hilfer.
\newblock Applications of fractional calculus in physics.
\newblock {\em World Scientific, River Edge, New Jersey}, 2000.

\bibitem{kilb}
A.A. Kilbas, H.M. Srivastava, and J.J. Trujillo.
\newblock {\em Theory and applications of fractional differential equations},
  volume 204 of {\em North-Holland Mathematics Studies}.
\newblock Elsevier Science B.V., Amsterdam, 2006.

\bibitem{mach}
J.~Tenreiro Machado, Virginia Kiryakova, and Francesco Mainardi.
\newblock Recent history of fractional calculus.
\newblock {\em Commun. Nonlinear Sci. Numer. Simul.}, 16(3):1140--1153, 2011.

\bibitem{magi}
R.L. Magin.
\newblock Fractional calculus models of complex dynamics in biological tissues.
\newblock {\em Comput. Math. Appl.}, 59(5):1586--1593, 2010.

\bibitem{mars}
J.E. Marsden and M.~West.
\newblock Discrete mechanics and variational integrators.
\newblock {\em Acta Numer.}, 10:357--514, 2001.

\bibitem{musl}
S.~Muslih.
\newblock A formulation of noether's theorem for fractional classical fields.
\newblock {\em preprint arXiv:1003.0653v1 [math-ph]}.

\bibitem{oldh}
K.B. Oldham and J.~Spanier.
\newblock {\em The fractional calculus}.
\newblock Academic Press [A subsidiary of Harcourt Brace Jovanovich,
  Publishers], New York-London, 1974.
\newblock Theory and applications of differentiation and integration to
  arbitrary order, With an annotated chronological bibliography by Bertram
  Ross, Mathematics in Science and Engineering, Vol. 111.

\bibitem{podl}
I.~Podlubny.
\newblock {\em Fractional differential equations}, volume 198 of {\em
  Mathematics in Science and Engineering}.
\newblock Academic Press Inc., San Diego, CA, 1999.
\newblock An introduction to fractional derivatives, fractional differential
  equations, to methods of their solution and some of their applications.

\bibitem{riew}
F.~Riewe.
\newblock Mechanics with fractional derivatives.
\newblock {\em Phys. Rev. E (3)}, 55(3, part B):3581--3592, 1997.

\bibitem{samk}
S.G. Samko, A.A. Kilbas, and O.I. Marichev.
\newblock {\em Fractional integrals and derivatives}.
\newblock Gordon and Breach Science Publishers, Yverdon, 1993.
\newblock Theory and applications, Translated from the 1987 Russian original.

\bibitem{neel}
A.~Zoia, M.-C. N\'eel, and A.~Cortis.
\newblock Continuous-time random-walk model of transport in variably saturated
  heterogeneous porous media.
\newblock {\em Phys. Rev. E}, 81(3):031104, Mar 2010.

\end{thebibliography}

\end{document}